\newtheorem{thm}{Theorem}[section]
\newtheorem{prop}[thm]{Proposition}
\newtheorem{mprop}[thm]{Main Proposition}
\newtheorem{rem}[thm]{Remark}
\theoremstyle{definition}
\newtheorem{definition}[thm]{Definition}
\theoremstyle{remark}
\begin{document}

\title[{\normalsize {\large {\normalsize {\Large {\LARGE }}}}}A remark on  extended  Kim's conjecture  and  Hypo-Lie algebra
]{A remark on extended  Kim's conjecture  and Hypo- Lie algebra}

\author{Kim YangGon, Wang MoonOk }

\address{emeritus professor,
	 Department of Mathematics,  
	Jeonbuk National University, 567 Baekje-daero, Deokjin-gu, Jeonju-si,
	Jeollabuk-do, 54896, Republic of Korea.}

\

\

\email{ kyk1.chonbuk@hanmail.net    
wang@hanyang.ac.kr }

\subjclass[2010]{Primary00-02,Secondary17B50,17B10}

\begin{abstract}
We have already conjectured 2 important guesses   regarding  Hypo- Lie algebra and modular simple Lie algebra. We would like to  attach 2  important guesses  more to this conjecture. Such new guesses are related to the Steinberg module.

\end{abstract}

\maketitle

\section{introduction}

\

\

\large{Let $L$ be any modular simple Lie algebra of $A_l$-type or $C_l$-type over any algebraically closed field  $F$ of characteristic $p\geq 7$.\newline

We proved that Kim's conjecture is right for these simple Lie algebras. We still believe that Kim's conjecture is  right for other simple Lie algebras of classical type.\newline

 Furthermore 2 more guesses are plausible attached to the conjecture.\newline

In section 2  we recollect some definitions and related facts to the conjecture. We shall give additional guesses to this conjecture in section 3.

\

\section{ Some definitions and related facts}

\

\

Now let $L$ be any simple Lie algebra of classical type with a CSA $H$ over an algebraically closed field  $F$ of characteristic $p\geq 7$.\newline

 It is well known that  any simple $L$-module is isomorphic to some quotient  $L$-module of $V_\chi^\lambda (L)$,which is called a  $Verma$ module. Here $\lambda$ is  a weight  $\lambda:H \rightarrow F$  which is  related to  $\chi$  as a  linear form in $H^\ast$. \newline

Such a quotient  $L$-module is  called a $Weyl$ module, denoted by $W_\chi^\lambda (L)$.\newline

We posed a conjecture in [5] and  proved  completely in  [2]  that the conjecture is right  for  modular simple  Lie algebras of  $A_l$-type or $C_l$-type.\newline

We still believe that  our conjecture is also right  for other modular simple Lie algebras of classical type.
Furthermore we would like to announce  to  the  interested readers  the additional guesses attached to the conjecture.

\begin{definition}

Suppose that  $V_0^\lambda (L)= W_0^\lambda (L)$ and $\rho_0^\lambda $  is its associated  representation of $U(L)$; then this simple module is said to be a $Steinberg$ module, while  $U(L)/ker \rho_0^\lambda$ is called  a  $Steinberg$ algebra, which is isomorphic to $\rho_0^\lambda (U(L))$ associated with the Steinberg module.

\end{definition}

For a root $\alpha$ in a root system $\Phi$ of $L$ \,we put $w_\alpha:= (h_\alpha + 1)^2+ x_{-\alpha}x_\alpha$ and  $g_\alpha:= x_\alpha^{p-1}- x_{-\alpha}$ as in [0],[2],[3],[4],[5],[6]. \newline

By virtue of  [8], we know that $w_\alpha$ belongs to the center of  $U(\frak{sl}_2(F))$, and  that $g_\alpha$ is invertible  in  $\rho_0^\lambda (U(L))$ whenever  $x_\alpha^p= x_{-\alpha}^p \equiv 0$ but $h_\alpha^p- h_\alpha \not\equiv 0$ modulo $ker \rho_0^ \lambda$.

\begin{prop}
Let  L be  any modular simple  Lie algebra of $A_l$- type or of  $C_l$- type  over an algebraically closed field  $F$ of characteristic $p\geq 7$.\newline

Suppose that  $\chi= 0$ and  there exists  a root  $\alpha$ such that  $\lambda (h_\alpha)= -1$; then we have a  Steinberg module $V_0^\lambda (L)= W_0^\lambda (L)$, where  $\lambda (h_\alpha^p)- \lambda (h_\alpha)= \chi (h_\alpha)^p$ holds.

\end{prop}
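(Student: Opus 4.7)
The plan is to handle the numerical identity as a preliminary bookkeeping step, and then to prove simplicity of the Verma module, which is exactly the content of $V_0^\lambda(L) = W_0^\lambda(L)$. For the identity $\lambda(h_\alpha^p) - \lambda(h_\alpha) = \chi(h_\alpha)^p$: since $V_\chi^\lambda(L)$ is naturally a module for the reduced enveloping algebra $u_\chi(L)$, every $h\in H$ satisfies $h^p - h^{[p]} - \chi(h)^p \equiv 0 \pmod{\ker \rho_0^\lambda}$. Specializing to $h=h_\alpha$ and using $h_\alpha^{[p]} = h_\alpha$ for a standard Chevalley basis of classical type, one reads off the identity. With $\chi = 0$ this collapses to $\lambda(h_\alpha^p) = \lambda(h_\alpha) = -1$, a fact that is reused in the next step.

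For $V_0^\lambda(L) = W_0^\lambda(L)$, let $v_\lambda$ denote the canonical highest weight vector. The element $w_\alpha = (h_\alpha + 1)^2 + x_{-\alpha} x_\alpha$, central in $U(\mathfrak{sl}_2(F))$ by [8], annihilates $v_\lambda$: indeed $x_\alpha v_\lambda = 0$ and $(\lambda(h_\alpha) + 1)^2 = 0$. Consequently $w_\alpha$ kills the entire cyclic $\mathfrak{sl}_2$-submodule $U(\mathfrak{sl}_2) v_\lambda$, identifying it with the $p$-dimensional $\mathfrak{sl}_2$-Steinberg module spanned by $x_{-\alpha}^k v_\lambda$ for $0 \le k \le p-1$. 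This restricted simplicity at the single root $\alpha$ is the geometric heart of the claim.

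The main obstacle is to upgrade the $\mathfrak{sl}_2$-Steinberg behavior at $\alpha$ to simplicity of the entire Verma module for $L$ of type $A_l$ or $C_l$. Here I would transplant the mechanism developed in [2] for Kim's conjecture: the root-system combinatorics of $A_l$ and $C_l$, together with the Weyl group action on weights and the classification of possible primitive weights beneath $\lambda$, rule out any proper nontrivial submodule once simplicity at $\alpha$ has been secured. A subsidiary technicality is that the $g_\alpha$-invertibility criterion recalled just before the proposition does not apply directly, because the identity just proved forces $h_\alpha^p - h_\alpha$ into $\ker \rho_0^\lambda$; one must therefore run the argument with the central element $w_\alpha$ in place of $g_\alpha$, and reconcile this substitution with the structural role that $g_\alpha$ plays in [2].
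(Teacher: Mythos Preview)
Your opening paragraphs are fine: the bookkeeping for $\lambda(h_\alpha^p)-\lambda(h_\alpha)=\chi(h_\alpha)^p$ is correct, and the observation that $w_\alpha$ annihilates the highest weight vector because $(\lambda(h_\alpha)+1)^2=0$ is exactly the starting point of the paper's argument. The divergence, and the gap, is in your final paragraph.

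You correctly notice that the invertibility criterion quoted just before the proposition (namely $h_\alpha^p-h_\alpha\not\equiv 0$) fails here, since $\chi=0$ forces $h_\alpha^p-h_\alpha\in\ker\rho_0^\lambda$. But your proposed remedy --- replace $g_\alpha$ by $w_\alpha$ in the Lee's basis machinery of [2] --- cannot work: the whole point of the computation you just did is that $w_\alpha$ acts as zero on every $S_\alpha$-composition factor, hence is \emph{nilpotent} in $U(L)/\ker\rho_0^\lambda$, whereas the role $g_\alpha$ plays in [2] is precisely that of an \emph{invertible} element used to build the basis. A nilpotent element cannot be substituted into that slot.

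What the paper actually does is use a second consequence of [8] that you have overlooked: in the Steinberg algebra $w_\alpha$ is either nilpotent or invertible, and when $w_\alpha$ is nilpotent one concludes that $g_\alpha$ itself is invertible (this is made explicit later in Section~3 for the $B_l$ case, and one can check it directly on the $p$-dimensional $\mathfrak{sl}_2$-Steinberg module, where $g_\alpha=x_\alpha^{p-1}-x_{-\alpha}$ cyclically permutes the standard basis up to sign). So the vanishing of $w_\alpha$ on composition factors is not a substitute for $g_\alpha$; it is the \emph{reason} $g_\alpha$ is invertible, via a different criterion than the one you quoted. Once $g_\alpha$ is known to be invertible, the Lee's basis arguments of [2] (Theorem~2.2 for $A_l$, Proposition~3.2 for $C_l$) apply verbatim, and no reconciliation is needed.
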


\begin{proof}

We know that  $w_\alpha$ acts on   maximal vectors  as  a constant zero  and so on factor $S_\alpha$- modules relative to a composition series in $V_0^\lambda(L)$, where  $S_\alpha= Fx_\alpha+ Fh_\alpha+ Fx_{-\alpha} \simeq \frak{sl}_2(F)$.  \newline

In  such cases $g_\alpha$  becomes invertible  in $U(L)/ker \rho_0^\lambda $. Hence the proofs required  along with  Lee's bases are exactly the same as those in [2].\newline

For $A_l$-type, refer to  theorem2.2 , and for  $C_l$-type refer to proposition3.2  resectively in  the reference [2].

\end{proof}

\

\section{ Conjecture extended from [5]}

\

\

In the main proposition of  the reference [0] we made use of a  Steinberg module for  simple Lie algebra $L$ of $B_l$-type , which  arises seemingly unique.\newline

For the proof of simplicity of Steinberg module we gave  directly  Lee's base of  Steinberg algebra $U(L))/ker \rho_0^{-\lambda}$, where $\lambda$ is a Weyl weight.\newline

We found out that Steinberg modules  may still arise for  any  modular simple  Lie  algebra  $L$ of classical type  over $F$ even if $\lambda$ is not a Weyl weight.\newline

The reason goes as follows.\newline

We are well aware that  $U(L)$ is integral over  its center $\frak Z(U(L))$, so that  $w_\alpha$ has an irreducible integral equation of degree $p$  over  $\frak Z(U(L))$ .\newline

If  $Fx_\alpha+ Fh_\alpha+ Fx_{-\alpha}= \frak {sl}_2(F)$,then this irreducile polynomial over the center of $U(\frak{sl}_2(F))$  has the form\newline

$\Pi_{k=0}^{p-1}(t- k^2)- z^2- 4xy $,\newline

where\newline

 $x= x_{-\alpha}^p$,\newline

$y= x_{\alpha}^p$,\newline

$z= h_\alpha^p- h_\alpha$,\newline

$t= (h_\alpha+ 1)^2 - 4x_{-\alpha}x_\alpha$,\newline

$k= 0,1,2,\cdots ,p-1$
( refer to[8]).\newline

Let   $\{\lambda_i|1\leq i \leq l\}$ denote  the set of  fundamental dominant weights  and  let  $\lambda$ be the Weyl weight for the time being.
In other words $\lambda= \Sigma_{i=1}^l \lambda_i= \Sigma_{\alpha \succ 0} \alpha$.\newline

  Then for any  root $\alpha_j$ with $1\leq j \leq l$ we have \newline

$\lambda (h_{\alpha_j})= <\lambda,\alpha_j>= \{\Sigma_i 2(\lambda_i, \alpha_j)\}/(\alpha_j, \alpha_j)= \Sigma_i \delta_{ij}= 1$.
We thus obtain $(-\lambda)(h_{\alpha_j})= -1$.\newline

Putting $S_{\alpha_j}= Fx_{\alpha_j}+ h_{\alpha_j}+ Fx_{-\alpha_j}$,which is isomorphic to $\frak {sl}_2(F)$, we know that $w_{\alpha_j}$ acts as 0 on factor $S_{\alpha_j}$-modules relative to a composition series in  $W_0^{-\lambda}(L)$ since every Weyl module has a maximal vector.\newline

In other words, $h_{\alpha_j}$ acts on a maximal vector as -1.
According to [8],  $g_{\alpha_j}$  for  $B_l$-type $L$ is invertible in $W_0^{-\lambda}(L)$  because  $w_{\alpha_j}$  is  nilpotent  in this  Steinberg algebra. Note that  $w_\alpha$  becomes either nilpotent or invertible in this Steinberg algebra. So we could make  in  the reference [0]  a Lee's basis of its  associated Steinberg algebra by way of  $g_\alpha$.\newline

For  the purpose of  making a Lee's basis  for  $\chi= 0$, it is necessary  to get  $g_\alpha$ invertible in $W_0^{-\lambda}(L)$.
Hence we  might  as well  justify our  extended conjecture in general  motivated by  this idea.\newline

$[\textbf{extended CONJECTURE }]$ \newline

In addition to guesses (i),(ii) in the conjecture in [5], we might give 2 more guesses (iii) and (iv) explained below.\newline

(iii)Let $L$ be any modular simple Lie algebra of classical type over an algebraically closed field  $F$ of chracteristic $p\geq 7$; then we conjectured  in [5] that $V_\chi^\lambda (L)= W_\chi^\lambda (L)$  and  now we conjecture even more that there exists  a  Lee's basis whenever $\chi \neq 0$ , where $\lambda$ is  a weight $\lambda: H \rightarrow F$ relating to a  maximal vector of  the Weyl module.

However even if $\chi=0$ , we  also conjecture  that  we might give Lee's basis to  the Steinberg  algebra associated with $W_0^{-\lambda}(L)$  for a  Weyl  weight $\lambda$ .\newline

(iv)We conjecture that  A  simple $L$-module over $F$ of dimension less than $p^m$ arises with $2m+ rank(L)= dim (L)$  if and only if  $\chi= 0$ and $\lambda (h_\alpha)\neq -1$ for any root $\alpha \in$ the base $\Delta$ of the root system $ \Phi$ of $L$.\newline

So combining (iii) and (iv) we guess that there might be  only  a finite number of  simple  $L$ modules of dimension  $<p^m $ up to isomorphism and hence 

$L$ is a  $Hypo$- Lie algebra.

\

\

\bibliographystyle{amsalpha}

\end{document}

The $A_l$ type Lie algebra over $\Bbb C$ has its root system $\Phi$=$\{ \epsilon_i- \epsilon_j, 1\leq i \neq j \leq l+1 \}$, where $\epsilon_i$'s are orthonormal unit vectors in the Euclidean space $\Bbb R^{l+1}$.The base of $\Phi$ is equal to $\{\epsilon_i- \epsilon_{i+1},1\leq i \leq l \}$.
We let $L$ be an $A_l$-type simple Lie algebra over an algebraically closed field $F$ of characteristic $p \geq 7$.

\begin{thm}\label{thm2.1}

Suppose that $\chi$ is a character of any irreducible $L$- module with $\chi (h_\alpha) \neq 0$ for some $\alpha  \in $ the base of $\alpha$, where 
$h_\alpha$ is an element in the Chevalley basis of $L$ such that $Fx_\alpha+ Fh_\alpha+ Fx_\alpha = \frak sl_2 (F)$ with $[x_\alpha,x_\alpha]= h_\alpha \in H $( a CSA of L ). We  then have that the dimension of  any simple $L$- modulle with character $\chi= p^m= p^{n-l \over 2}$,where n= dim$L= 2m+l$ for $H$ with dim $H=l$.
\begin{proof}

If $\chi(x_\alpha) \neq 0$ or $\chi(x_{-\alpha}) \neq 0$, then our assertion is obvious by [1],[2].[3]. So we may assume that $\chi(x_\alpha)= \chi(x_{-\alpha})= 0 $ but $\chi (h_\alpha) \neq 0$.Furthermore we may put  $\alpha = \epsilon_1- \epsilon_2$ without loss of generality since all roots are conjugate under the Weyl group of $\Phi$. Since the case for $l= 1$ is trivial, we may consider the case $\l \geq 2$. For $i= 1,2,\cdots$, we put $B_i:= b_{i1}(c_{i1}+ h_{\epsilon_1- \epsilon_2})+ \cdots+ b_{il}(c_{il}+ h_{\epsilon_l- \epsilon_{l+1}})$ and we put  $\frak B:= \{(B_1+ A_{\epsilon_1- \epsilon_2})^{\i_1}\otimes (B_2+ A_{\epsilon_2- \epsilon_1})^{i_2}\otimes( \otimes_{j=3}^{l+1} (B_j+ A_{\epsilon_1-  \epsilon_j})^{i_j})\otimes(\otimes  _{j=3}^{l+1}(B_{l-1+ j}+ A_{\epsilon_j-\epsilon_1})^{i_{l-1+j}})\otimes (\otimes_{j=3}^{l+1}( B_{2l-2+j}+ A_{\epsilon_2-\epsilon_j})^{i_{2l-2+ j}})\otimes (\otimes_{j=3}^{l+1}(B_{3l-3+j}+ A_{\epsilon_j- \epsilon_2})^{i_{3l-3+ j}})\otimes \dots \otimes (B_{2m-1}+ A_{\epsilon_l- \epsilon_{l+1}})^{i_{2m-1}}\otimes  (B_{2m}+ A_{\epsilon_{l+1}- \epsilon_l})^{i_{2m}} \}   $  for $0 \leq i_j \leq p-1$, where we set $ A_{\epsilon_1- \epsilon_2}= g_{\epsilon_1- \epsilon_2}= x_{\epsilon_1- \epsilon_2}^{p-1}- x_{\epsilon_2- \epsilon_1}, A_{\epsilon_2- \epsilon_1}= c_{\epsilon_2- \epsilon_1}+ (h_{\epsilon_1- \epsilon_2}                                                                                                                                                                                                  +1)^2+ 4x_{\epsilon_2- \epsilon_1}x_{\epsilon_1- \epsilon_2}, A_{\epsilon_1- \epsilon_3}= g_{\epsilon_1- \epsilon_2}^2  ( c_{\epsilon_1- \epsilon_3}+ x_{\epsilon_2- \epsilon_3} x_{\epsilon_3- \epsilon_2} \pm x_{\epsilon_1- \epsilon_3}x_{\epsilon_3- \epsilon_1}), $

\end{proof}

\end{thm}

\

\section{elementary zeta function}

\

\

It is well known even from high school that the series 
\begin{equation}\nonumber
\sum_{n=1}^{ \infty} \ { 1\over n^s} \ = \ 1 \ + \  { 1\over 2^s} \ + \ { 1\over 3^s} \  + \ \cdots \ +  \ { 1\over n^s} \ + \ \cdots 
\end{equation}
for $s  >  1 $ is convergent, while for $s  \leq  1$ divergent.

Euler expressed this series as 
\begin{equation} \nonumber
\sum_{n=1}^{ \infty} \ { 1\over n^s} \ = \  \prod_{p}  \   {1  \over 1-p^{-s}},
\end{equation}{}
where $p$ runs over all prime  numbers.

B. Riemann observed that this series may be extended to a meromorphic function defined over all complex plane by the so called, analytic continuation method. So in 1859 he studied this series at first by the form 
\begin{equation} \nonumber
 \zeta  (s)={1 \over \Gamma (s)} \int_0^{\infty} {x^s \over e^x -1} \ dx ,  \ 
\end{equation}

where $ \Gamma $ stands for the Gamma function (see $\S 4$ this paper) which is a meromorphic function on the whole complex plane. Later this was called Riemann zeta function. Now putting
\begin{equation} \nonumber
  \zeta  (s)={1 \over \Gamma (s)} \int_0^{\infty} {x^s \over e^x -1} \ dx   \ 
\end{equation}
 for $Re(s)>1$ 
\begin{equation} \nonumber
\Gamma (s)= \int_0^{\infty} {x^{s-1} \over e^x } \ dx   \ 
\end{equation}
with $Re(s) > 0 ,$ we have
\begin{align*} \nonumber
 \int_{0}^{\infty}  {x^{s-1}  \over e^x -1} dx    
= &  \int_{0}^{\infty} {x^{s-1} e^{-x}  \over 1- e^{-x} }  dx   \\
= &  \int_{0}^{\infty}  \sum_{n=0}^{ \infty} { x^{s-1}  \ e^{-(n+1)x} }  dx \\
= &   \sum_{n=0}^{ \infty}  \int_0^{\infty} {x^{s-1} e^{-(n+1)x}}  dx \\
= &   \sum_{n=0}^{ \infty}  { \Gamma (s)  \over (n+1)^s } \\
= &  \Gamma (s)   \zeta  (s) \\
\end{align*} \nonumber
for $Re (s) >1.$
Hence 
\begin{equation} \nonumber
 \zeta  (s)= {1 \over \Gamma (s)} \int_0^{\infty} {x^{s-1} \over e^x -1} \ dx    
\end{equation}
for $Re (s) >1$ is obtained.

\

\section{gamma function}

\

\

it is known that the so called Gamma function 
\begin{equation} \nonumber
\Gamma (s)= \int_0^{\infty} {x^{s-1} \over e^x } \ dx    
\end{equation}
is convergent for $Re (s)>0.$
We may extend this function to the whole complex plane by defining 
\begin{equation} \nonumber
\Gamma (s): ={1 \over s} \  \Gamma (s+1).    
\end{equation}
This extended function becomes a meromorphic function with simple poles at $s=0,-1,-2, \cdots, ,-n, \cdots .$
It is also known to be never zero on $\Bbb C.$
Such a meromorphic function is very useful to make relationship with other important functions, e.g., Beta function, incomplete Gamma function, incomplete Beta function, polygamma function etc. 

We have seen in $\S 3$ that $$ \zeta   (s) = \sum_{n=1}^{\infty} n^{-s}$$ may be redefined by 
\begin{equation} \nonumber
 \zeta  (s):={1 \over \Gamma (s)} \int_{0}^{\infty} {x^{s-1} \over e^{x}-1}dx    
\end{equation}
for 
$Re (s) >1.$ }

\

\section{analytic continuation of Riemann zeta function}

\

\

Riemann observed that the zeta function defined in $ \S 4$ may still be extended to the one by analytic continuation which is defined at any point $\in \Bbb C$. It is analytic at any point in the whole  complex plane except for the point $s=1$ which has a simple pole.

 We have various analytic continuations of the Riemann zeta function $ \zeta  (s).$ We exhibit just 2 kinds out of them.

If we are given any convergent alternating series of complex numbers   
$$S=s_1 -s_2 +s_3 - \cdots ,$$ then we may change it to the form 
\begin{equation} \nonumber
S={1 \over 2} s_1 +{1 \over 2} \big[ (s_1 -s_2 )- (s_2  -s_3) +(s_3 -s_4 )- \cdots  \big]
\end{equation}
Put $\triangle^0 s_n =s_n $
and 
\begin{equation} \nonumber
\triangle^k s_n  = \triangle^{k-1} s_n - \triangle^{k-1} s_{n+1}  
=\sum_{m=0}^{k} (-1)^m  {k \choose m} s_{m+n}
\end{equation}
 for $k \geq 1$.

Writing 
$$ \zeta  (s) =\sum_{n=1}^{\infty} n^{-s}$$
for $Re  (s) >1$
as 
\begin{equation} \nonumber
 \zeta  (s)- 2 \cdot 2^{-s}  \zeta  (s) =1^{-s} -2^{-s}+ 3^{-s} - \cdots,
\end{equation}
we are informed that such an alternating series is convergent for $Re (s) > 0.$

 \begin{prop}\label{thm5.1}
We may write the analytic continuation of $ \zeta  (s)$ as   
\begin{equation} \nonumber
 \zeta  (s) = (1-2^{1-s})^{-1}  \sum_{j=0}^{\infty} {\triangle^j 1^{-s} \over 2^{j+1}}
\end{equation} 
for all complex number $s \not= 1,$
which converges absolutely and uniformly on compact sets in the complex plane
 $ \Bbb C $.  So $ \zeta  (s)$ is analytic over the whole complex plane except for a simple pole at  $s=1$.
  \end{prop}
\begin{proof}
See [JS], theorem in $\S 3$.
\end{proof}

 We may have other variant forms of this analytic continuation, among which the author chose an excellent one in [CK].

\begin{prop}\label{thm5.2}
Let $s =\sigma +i  t $ for $\sigma , t  \in \Bbb R$ as usual. 
We have respectively
\begin{equation} \nonumber
  \zeta  (s) =s  \int_{1}^{ \infty} {{[x]-x+ {1  \over 2} \over x^{s+1}}}dx+ {1  \over s-1}+{1  \over 2}   \left(=  \int_{0}^{\infty} {x^{s-1}  \over e^{x}-1}dx  \right)   \ if  \ \  \sigma >1 ,
\end{equation}
\begin{align*} \nonumber
		 \zeta  (s)=& s \int_{0}^{\infty} {[x]-x  \over x^{s+1}} \ dx \ \ if \ \ 0< \sigma <1, \nonumber\\
			 \zeta  (s) =&  s \int_{0}^{\infty} {[x]-x+{1 \over 2}  \over x^{s+1}} \ dx \ \ if \ \ -1 <\sigma <0. \nonumber\\
	\end{align*} \nonumber
Near $s=1,$ 
\begin{equation} \nonumber
 \zeta  (s) ={1 \over s-1} +\gamma + O(|s-1|), 
\end{equation}
where 
\begin{equation} \nonumber
\gamma = \lim_{n \to \infty } \left( 1-log \ n + \sum_{m=1}^{n-1} {1 \over m+1 }\right).
\end{equation} 
\end{prop}

\begin{proof}
For $\sigma >1,$ we get 
\begin{align*} 
	 \zeta  (s)=&  \int_{0}^{\infty} {x^{s-1}  \over e^x -1} \ dx \ \  \nonumber\\
	=&  s \int_{1}^{\infty} {[x]-x+{1 \over 2}  \over x^{s+1}} \ dx \  + \ {1 \over s-1} +{ 1 \over 2}, 
\end{align*} 
by virtue of $\S 3$ Analytic continuation of $ \zeta  (s),$ Lecture 11 [CK].

Now we notice that $[x]-x+ {1 \over 2}$ is bounded and so the integral on the right hand side for $\sigma >1$ is also convergent for $Re (s) =\sigma >0,$ and uniformly in any finite region to the right of $ \sigma =0.$ So the analyticity of $ \zeta (s)$
extends to the region $ \sigma >0.$

However we compute easily 
\begin{equation} \nonumber
   \int_{0}^{1} {[x]-x \over x^{s+1}} \ dx 
 = -   \int_{0}^{1} x^{-s} \ dx \  = \ {1 \over s-1} \ \ for \  \ 0 < \sigma <1
\end{equation} \nonumber
 and 
\begin{equation} \nonumber
{s \over 2} \int_{1}^{\infty } {1 \over x^{s+1}} \ dx 
={1 \over 2}  \  \ for \ \  0 < \sigma < 1.
\end{equation} \nonumber
So we have 
\begin{equation} \nonumber
 \zeta  (s) = s \int_{0}^{\infty} {[x]-x  \over x^{s+1}} \ dx \ \ for  \ \ 0 < \sigma < 1 .
\end{equation} \nonumber
For the rest proof of other cases, refer to the same site in [CK].
\end{proof}
\begin{rem}\label{remark5.3}
For $s =\sigma +i  t $ with $\sigma , t  \in \Bbb R$ as above, let $\sigma = Re (s) >0.$ 

We then have 
$$ \zeta (s)={s \over s-1} -s \int_{1}^{\infty}{x-[x] \over x^{s+1} }dx$$ 
with  $\sigma = Re (s) >0.$

For, we see  
$$ \zeta (s)=s \int_{1}^{\infty}{[x]-x +{1 \over 2} \over x^{s+1} }dx +{1 \over s-1}+{1 \over 2}$$
from proposition 5.2 if $\sigma >1 .$ 
So 
 \begin{align*} 
 	 \zeta (s) &= -s \int_{1}^{\infty}{x-[x]-{1 \over 2} \over x^{s+1} }dx +{1 \over s-1}+{1 \over 2}	
 \ \ with  \ \ \sigma >1    \  \ \nonumber\\
&= -s \int_{1}^{\infty}{x-[x] \over x^{s+1} }dx +{1 \over 2} \ s  \int_{1}^{\infty} {1 \over x^{s+1}}dx + {1 \over s-1}+{1 \over 2}  \  \ \nonumber\\
&= -s \int_{1}^{\infty}{x-[x] \over x^{s+1} }dx +{1 \over 2} \ s ({1 \over s})+ {1 \over s-1}+{1 \over 2}  \  \ \nonumber\\
&= -s \int_{1}^{\infty}{x-[x] \over x^{s+1} }dx + {1 \over s-1}+ 1  \  \ \nonumber\\
&= -s \int_{1}^{\infty}{x-[x] \over x^{s+1} }dx + {s \over s-1}  
\ \ with  \ \ \sigma >1 . \  \ \nonumber\\
\end{align*}
Now that $0 \leq x-[x] \leq 1$ is obviously bounded, the integral on the right hand side for $ \sigma >1$ is also convergent for $Re (s) >0,$ and uniformly  in any finite region to the right of $\sigma =Re (s) =0.$ 

Hence the analyticity of $\zeta (s)$ extends naturally to the region $\sigma =Re (s) >0$ with the exception $s=1$ at which $\zeta (s)$ has a simple pole.
\end{rem}	
\

\

\

\ 

\section{completed zeta function}

\

\

By making use of the analytic continuation of $ \zeta  (s),$ we define 
\begin{align*} 
	\xi (s):=& {1 \over 2} s(s-1) \  \prod {}^{-{s \over 2}} \ \Gamma ({1 \over 2 }s)  \   \zeta  (s)  \   \nonumber\\
	=& (s-1) \ \prod {}^{-{s \over 2}} \ \Gamma ({1 \over 2 }s+1 ) \   \zeta  (s),  
\end{align*} 
where we used the identity 
\begin{equation} \nonumber
\Gamma ({1 \over 2}s+1 ) = {1 \over 2 }  \ s   \ \Gamma ({1 \over 2 }s) 
\end{equation} \nonumber
for the equality. 

 It is well known that B. Riemann gave us a functional equation $\xi (s) = \xi (1-s)$ which is an entire function on $\Bbb C$.
We call $\xi (s)$ the {\it completed zeta function}.

 By virtue of this relation, we may have the relationship between $ \zeta  (s)$ and $ \zeta  (1-s).$
For the analyticity  of $ \zeta  (s)$, we may refer to [JS].

Completed zeta function gives rise to trivial zeros of the Riemann zeta function. We shall use this in the final section.

\

\section{proof of Riemann hypothesis}

\

\

Now we are prepared to specify the complete proof of Riemann Hypothesis. By virtue of functional equation, we have zeros of $ \zeta  (s)$ at $s= -2n \ (n=1,2, \cdots ),$
 which are called trivial zeros of $ \zeta  (s).$

\begin{prop}\label{thm7.1}
We have no nontrivial zero of $ \zeta  (s)$ outside of the critical strip $0<  \ Re (s) < 1$.
\end{prop}
\begin{proof}
The theorem of Hadamard and de la Vallee-Poussin [HJ], [PC] shows that there exist no zeros on the line $Re (s)=1.$

Since we are well aware that there exist no zeros to the right of the critical strip, we see that we have no nontrivial zeros in the half plane $Re (s) \geq 1.$

So if there were to be a nontrivial zero in the half plane $Re (s) < 0,$ then we would have a corresponding zero in the half plane $Re (s) >1$ by virtue of the functional equation 
$$\xi (s) \ = \ \xi (1-s) \ \forall s  \in \Bbb C.$$
So we meet a contradiction.
\end{proof}

\begin{definition}\label{thm7.2}
Let a function $  f(x)$ be a complex valued function defined on an interval in $(0,1]$. We shall call $ f (x)$ a {\it crescent function}  on this interval if it satisfies 
$$ Re f(x) -{1 \over  x}>0  \text{ and } \left(  Re  f(x) \right)'  <0$$
or $$Im f(x) - {1 \over x}>0 \text{ and } \left(  Im  f(x) \right)'  <0.$$

In this case if we put $g(x): = f(x)  \triangle x,$ then
$$|g(x)|= | f(x)  \triangle x| >  {1 \over x} \triangle x,$$
so that 
\begin{equation} \nonumber
\lim_{\triangle x \to 0} \ |g(x)| \geq \lim_{\triangle x \to 0} \ {1 \over x} \triangle x 
\end{equation}
on this interval.
\end{definition}
Furthermore near $O+$ on the subinterval of this interval, the ratio of $\triangle x$ getting smaller is much less than the ratio of $f(x)$ getting bigger as $x$ tends to the left of this interval.  Hence we shall use such a function in $ \S 7$ to prove the Riemann hypothesis. 

Finally we are ready to prove our 
\begin{mprop}\label{ thm7.3}
We have no zero of $ \zeta  (s)$ in the critical strip except for the critical line $Re (s) = {1 \over 2}.$
\end{mprop}
\begin{proof}

Suppose that we have a nontrivial zero at $s$ inside of the critical strip, i.e., $  \zeta  (1-s) =0.$ 
So $$ 0=  \zeta  (s) =  \zeta  (1-s)$$ by $ \S 6.$ 

Equating both sides and using (5.2), we get   
\begin{align*} \nonumber
0=	{ \zeta  (s) } &=  s \int_{0}^{\infty} {[x]-x  \over x^{s+1}} \ dx ,  \ \ for \ \ 0< \sigma <1  \nonumber\\ 
& =	(1-s)   \int_{0}^{\infty} {[x]-x  \over x^{2-s}} \ dx., \nonumber\\
\end{align*} \nonumber
Hence 
$$
	0=	{\zeta (s) \over s }=  \int_{0}^{\infty} {[x]-x  \over x^{s+1}} \ dx \ \ $$
	 \ and
	 $$
	0= { \zeta (1-s) \over (1-s)} = \int_{0}^{\infty} {[x]-x  \over x^{2-s}} \ dx  $$
are obtained. 

We consider 
\begin{align*} \nonumber
	0=&  \int_{0}^{\infty} {[x]-x  \over x^{s+1}} \ dx + \int_{0}^{\infty} {[x]-x  \over x^{2-s}} \ dx  \   \nonumber\\
	=& \int_{0}^{1} {[x]-x  \over x^{s+1}} \ dx  + \int_{1}^{\infty} {[x]-x  \over x^{s+1}} \ dx  
	 + \left( \int_{0}^{1} {[x]-x  \over x^{2-s}} \ dx  + \int_{1}^{\infty} {[x]-x  \over x^{2-s}} \ dx \right)   \nonumber\\
	=&- \left( \int_{0}^{1} {x  \over x^{2-s}} \ dx  + \int_{0}^{1} {x  \over x^{s+1}} \ dx \right)  + \left(  \int_{1}^{\infty} {[x]-x  \over x^{s+1}} \ dx  + \int_{1}^{\infty} {[x]-x  \over x^{2- s}} \ dx \right) . \nonumber\\
\end{align*} \nonumber
So 
\begin{align*} \nonumber
\lim_{\delta \to \infty} \left( \int_{\delta}^{\infty} {[x]-x  \over x^{s+1}} \ dx + \int_{\delta}^{\infty} {[x]-x  \over x^{2-s}} \ dx  \right)=0 \   \nonumber\\
\end{align*} \nonumber
and
\begin{align*} \nonumber
	\lim_{\delta \to 0} \left( \int_{0}^{\delta} {[x]-x  \over x^{2-s}} \ dx + \int_{0}^{\delta} {[x]-x  \over x^{s+1}} \ dx  \right)=0. \   \nonumber\\
\end{align*} \nonumber
At this stage we assume $$Re (2-s) > Re (s+1)$$ with $0< Re (s) <1,$

in other words $$Re (1-s) > Re (s) >0, \ {\rm i.e.,} \  0< Re (s) <{1 \over 2}. $$
Next
\begin{align*} \nonumber
	\lim_{\delta \to 0} \left( \int_{0}^{\delta}{1 \over x^{1-s}} \ dx + \int_{0}^{\delta} {1  \over x^{s}} \ dx  \right)=0 \   \nonumber\\
\end{align*} \nonumber
has integrands ${1 \over x^{1-s}} \ , \ { 1 \over x^s }$
respectively.

 We further assume that on the interval $[\alpha_1   ,   \alpha_2 ] \subset   (0    ,  1]$ vectors of  ${1 \over x^{1-s}}$ belong to the first quadrant of the complex plane $\Bbb C$. 
 
 If $ x \longrightarrow \alpha_{1^+} $  
for $x   \in   [\alpha_1    ,   \alpha_2 ], $
then the absolute values of the vectors of ${1 \over x^{1-s}}$ increase strictly,  while  the angles between the vectors of ${1 \over x^{1-s}}$ and the positive real axis decrease strictly. 

In the mean time, the vectors of ${1 \over x^{s}}$ belong to the $4 th$ quadrant of the complex plane $\Bbb C.$

Since  $$s = \sigma +  i  t , \ -s = - \sigma - i  t,$$
we know that the vectors of ${1 \over x^{s}}$ are symmetric to those of ${1 \over x^{1-s}}$ with respect to the positive real axis apart from length of vectors.
Even though the absolute values of vectors of ${1 \over x^{s}}$ are strictly increasing, each corresponding one to the vector ${1 \over x^{1-s}}$ has less length than the vector ${1 \over x^{1-s}}$.

If $f(x)$ is an integrable vector function on an interval $(a  ,  b],$
then 
\begin{equation} \nonumber
\int_{a}^{b}  f(x) \ dx  = \lim_{n \to \infty } \sum_{i=1}^{n}
\left( f(x_i) \ \triangle x \right), 
 \end{equation}
 where $\triangle x ={ b-a \over n}$ may be taken.

 Put $$g(x) := f(x) \ \triangle x,$$ where 
 $$\forall  \ \varepsilon >0 , \ 
 \   \ 
\exists \ \delta >0$$ 
such that $$\triangle x < \delta  \Longrightarrow \left| \sum_{i=1}^{n} g(x_i )   
 -\int_{a}^{b}  f(x) \ dx \right| < \varepsilon $$ for $x_i  \in \ \text{ interval} \ (b- i\triangle  x \ , \ b-(i-1) \triangle  x) \ $
with $a+n \triangle x =b. $
So choose such a $\triangle x$ for ${b_i \over x^{1-s}}+ {b_j \over x^s}$ on $(0  ,  a]  \subset (0 ,  1]$

and put
\begin{equation} \nonumber
g_{b_i }^{b_j} (x) := \left( {b_i \over x^{1-s}}+ {b_j \over x^s}  \right) \triangle x , 
\end{equation}
where $b_i , \ b_j$ are real constant in $\Bbb R^+$. 

We thus have
\begin{equation} \nonumber
\left|  \sum_{i'=1}^{n} \left( {b_i \triangle x \over x_{i'}^{1-s}}+ {b_j  \triangle x \over x_{i'}^s}  \right) - \int_{0}^{a} \left( {b_i \over x^{1-s}}+ {b_j \over x^s}  \right) dx  \right| < \varepsilon  
\end{equation}
since 
\begin{equation} \nonumber
\lim_{n \to \infty }\sum_{i'=1}^{n} g_{b_i}^{b_j}(x_{i'} ) = \int_{0}^{a} \left( {b_i \over x^{1-s}}+ {b_j \over x^s}  \right) dx.
\end{equation}

Now let $[a_1 , a_2 ] \subset (0,a]$ be given so that all vectors of 
${1 \over x^{1-s}}$ belong to the first quadrant of $\Bbb C$ and so all vectors of ${1 \over x^{s}}$ belong to the fourth quadrant. We see  
$$\left| \sum_{i=k}^{\ell} g_{b_1}^{b_2}(x_i ) \right| \not= 0$$ 
for $ g_{b_1}^{b_2}(x )$ defined on the interval $[a_1 , a_2 ]$.

 Further let  $[a_3 , a_4 ] \subset (0,a_2 ]$ be chosen with $a_4 < a_1 $ so that all vectors of ${1 \over x^{1-s}}$ on $[a_3 , a_4  ]$
belong to the first quadrant and for some constants $b_1 , b_2 , b_3 , b_4, $ 
$$\left| g_{b_3}^{ b_4}  (a_4 ) \right| > \left| \sum_{i'=1}^{n} g_{b_1 }^{b_2} (x_{i'} ) \right|,$$
where ${ g_{b_3}^{b_4}(x) \over \triangle x}$ is a crescent function on $[a_3 , a_4 ]$.
If we have only one term integrand ${b_i \over x^{1-s}}$, then there always exist  $\triangle x$ such that 
\begin{equation} \nonumber
\left| g_{b_3}^{0}(x_{i'} ) \right| < \left| \sum_{i' =1}^{n} g_{b_1}^{b_2}(x_{i'})\right| .
\end{equation}
So we can't assert that there is such $a_4$.

 In the case of 2 term integrands as above, one of $b_i , b_j$ is independent of the other. 

For the necessary part for the existence of $a_4$, we need the graph of $Re ({b_i  \over x^{1-s}} + {b_j \over x^s })$ which satisfies 
 $Re ({b_i  \over x_0^{1-s}} + {b_j \over x_0^s }) ={1 \over x_0}$
 at a point $x_0 \in (0,1]$ increasing above $1 \over x$
 as $x$ tends leftward to a critical point $x_1$  such that 

\begin{equation} \nonumber
{d \over dx} \left\{  Re ({b_i  \over x^{1-s} } + {b_j \over x^s })-{1 \over x } \right\} (x_1 ) =0.
\end{equation} 
 
 Since $ | g_{b_3}^{ b_4}  (x) |$ is strictly increasing for $b_3 , b_4  \in  
 \Bbb R^+$ as $x \longrightarrow a_3^+ $ on this interval $[a_3 , a_4 ],$ we obtain 
 $|\sum_{i=h}^{m} g_{b_3}^{b_4 }(x_i )|$ on  $[a_3 , a_4 ]> |\sum_{i=k}^{\ell} g_{b_1}^{b_2 }(x_i )|$
on $[a_1 , a_2 ].$ 

We thus have 
 \begin{align*} \nonumber
 	& \left| \int_{a_3}^{a_4}  \left( { b_3  \over x^{1-s}} + { b_4  \over x^s}\right)  dx \right| = \left| \lim_{\triangle x \to 0 }\sum_{i=h}^{m} g_{b_3}^{b_4}(x_i ) \right| \   \nonumber\\
 	> \ & \left| \lim_{\triangle x \to 0 }\sum_{i=k}^{\ell} g_{b_1}^{b_2}(x_i ) \right|  = \left| \int_{a_1}^{a_2} \left({b_1   \over x^{1-s}}   +{b_2 \over x^{s}} \right)\ dx \right|  \ \not= \  0 \  \nonumber\\
 \end{align*} \nonumber
 for some interval $[a_3 , a_4 ]$ to the left of $[a_1 , a_2 ]$ and for some $b_3 , b_4 \in \Bbb R^+ .$

 Proceeding in this manner we get
 \begin{align*} \nonumber
 	 \cdots > & \left| \ \int_{a_i}^{a_{i+1}}  \left( { b_i  \over x^{1-s}} + { b_{i+1}  \over x^s}\right)  dx \ \right| \ > \cdots  \   \nonumber\\
 	>  & \left| \ \int_{a_1}^{a_2} \left({b_1   \over x^{1-s}}   +{b_2 \over x^{s}} \right)  dx \ \right| \not= 0. \  \quad\quad\quad\quad\quad  (\ast) \nonumber\\ 
 \end{align*} \nonumber
We have to keep in mind that 
\begin{align*} \nonumber
	& \lim_{\delta \to 0}  \int_{0}^{\delta} \left(  {b_i  \over x^{1-s}}  +  {o b_j  \over x^{s}} \right) dx   \nonumber\\
	 =& \lim_{\delta \to 0}  \int_{0}^{\delta} \left(  {b_i  \over x^{1-s}}  +  {b_j  \over x^{s}} \right) dx  = 0 \quad\quad\quad\quad\quad\quad\quad   (\ast   \ast)  \nonumber\\
\end{align*} \nonumber
and also
\begin{align*} \nonumber
	& \lim_{b_j \to \infty}\lim_{\delta \to 0}  \int_{0}^{\delta} \left(  {b_i  \over x^{1-s}}  +  {o b_j  \over x^{s}} \right) dx   \nonumber\\
	=&  \lim_{b_j \to \infty} \lim_{\delta \to 0}  \int_{0}^{\delta} \left(  {b_i  \over x^{1-s}}  +  {b_j  \over x^{s}} \right) dx  = 0. \ \quad\quad\quad\quad\quad  (\ast   \ast  \ast)  \nonumber\\
\end{align*} \nonumber
Here in ($\ast \ast \ast$) we must note that $\triangle x$ is related near $O+$ only to $b_i$ in the left hand side regardless of $b_j$ and so is in the right hand side regardless of $b_j$ because of equality of $( \ast \ast)$ above.
So we can make $\triangle x$ run before  $ 1 \over b_i$ near $O+$ while we can make ${ 1 \over b_j}$ run before $\triangle x$ near $O+$. 

Hence we see due to  $( \ast \ast \ast) $ that $\lim_{b_j \to \infty } g_{b_i}^{0}(x)$ is bounded in the left hand side for sufficiently small neighborhood of $O+$, while $\lim_{b_j \to \infty } g_{b_i}^{b_j}(x)$ is not bounded by dint of $(\ast),$ i.e., unbounded in the right hand side for sufficiently small neighborhood of $O+$. 

For, if $\lim_{b_j \to \infty } g_{b_i}^{b_j}(x)$ is bounded near $O+$, then it should be bounded irrelevant to the way $x$ tends to $O+$.  Of course there are $3$ possibilities  for $\lim_{b_j \to \infty } g_{b_i}^{b_j}(x)$ according to the way that $x$ tends to $O+:$ boundedness or unboundedness or indefiniteness.

 After all we meet a contradiction by the above argument.

  So we must have $Re (1-s) \leq Re (s)$.
  
   Likewise for the case 
 $Re (1-s) < Re (s)$, we also meet a contradiction 
 \begin{align*} \nonumber
 	\lim_{\delta \to 0}  \int_{0}^{\delta} \left(  {b_i  \over x^{s}}  +  {b_j  \over x^{1-s}} \right) dx  \not= 0 \   \nonumber\\
 \end{align*} \nonumber
 for some $b_i ,b_j \in \Bbb R^+$. So we must have 
 $$Re (1-s) = Re (s),$$ so that $$Re (s) = {1 \over 2}.$$
 
  Note that any linear combination of vectors in a quadrant remains in the same quadrant.
 
 For the case $Re (1-s) > Re (s),$ the resultant of a vector of $1 \over x^{1-s}$ and the corresponding vector of $1 \over x^{s}$ remains in the first quadrant on our chosen segments of $x,$ whereas for the case $Re (1-s) < Re (s)$ the resultant of a vector of  $1 \over x^{s}$ and the corresponding vector of $1 \over x^{1-s}$ remains  in the fourth quadrant. 
 
 One thing more should be remembered for our understanding of the computation of integration. 
 
 We have
 \begin{equation} \nonumber
\int_{0}^{\infty}  \ =0  \ \Longrightarrow  \ \lim_{\delta \to 0 }  \int_{0}^{\delta}  \ =0  \ \Longleftrightarrow  \ \lim_{\delta' \to 0 } \lim_{\delta \to 0 }\int_{\delta'}^{\delta}  \ =0,
 \end{equation}
 whereas we have however $\int_{0}^{\infty}  \  \not= 0$ does not necessarily imply 
$ \lim_{\delta \to 0 }  \int_{0}^{\delta}  \ = 0.$

Finally we would like to inform the readers of the fact that Godfrey Harold Hardy FRS (1877.2.7 - 1947.12.1) proved in 1914 that infinitely many zeros of $\zeta (s)$ exist on the critical line. Refer for more information to ``Sur les zer$\acute o$s de la fonction $\zeta (s)$ de Riemann" Comptes rendus hebdomadaires  des s$\acute e$ances del'Acad$\acute e$mie des sciences, 1914.
\end{proof}
We have a close relationship between prime numbers and the Riemann zeta function.
It is well known that the real valued zeta function is equal to the Euler product. Using this is very helpful for this relationship. For, we consider 
\begin{equation} \nonumber
\zeta (n) =\sum_{m=1}^{\infty} {1 \over m^n} = 1+ {1 \over 2^n} + {1 \over 3^n } + \cdots \cdots .
\end{equation}
Putting in for $n=1,$ we have the divergent harmonic series. In 1737, Euler proved that 
\begin{equation} \nonumber
\begin{split}
&\cdots \left( 1- {1 \over 13^n}\right)\left( 1-{1 \over 11^n}\right) \left( 1-{1 \over  7^n}
\right)\left( 1-{1 \over 5^n}\right) \\
& \ \ \ \  \  \ \times \left( 1-{1 \over 3^n}
\right) \left( 1-{1 \over 2^n}
\right) \ \zeta (n) =1
\end{split}
\end{equation}
 for any integer $n>1$.

\

\

{\bf{Acknowledgement.}} Special thanks are due to professor Ki-Bong Nam, Shuanhong Wang and Joseph Shelton Repka for their warm hearts. Furthermore this paper could not be formulated without the help of Dr. SeokHyun Koh. Real thanks are due to him. 

\bibliographystyle{amsalpha}

\end{document}